\theoremstyle{plain}
\newtheorem{thm}{Theorem}[section]
\newtheorem{lem}[thm]{Lemma}
\newtheorem{prop}[thm]{Proposition}
\theoremstyle{definition}
\newtheorem{defn}[thm]{Definition}
\theoremstyle{remark}
\newtheorem{rem}[thm]{Remark}
\newtheorem{exa}[thm]{Example}
\theoremstyle{plain}
\numberwithin{equation}{section}
\newcommand{\del}{\delta}
\newcommand{\R}{{\mathbb R}}
\newcommand{\Q}{{\mathbb Q}}
\newcommand{\N}{{\mathbb N}}
\newcommand{\Z}{{\mathbb Z}}
\newcommand{\calD}{{\mathcal D}}
\newcommand{\calJ}{{\mathcal J}}
\def\udot#1{\ifmmode\oalign{$#1$\crcr\hidewidth.\hidewidth
    }\else\oalign{#1\crcr\hidewidth.\hidewidth}\fi}
\def\R{\mathbb{R}}
\def\Z{\mathbb{Z}}
\def\Q{\mathbb{Q}}
\begin{document}
	
\title[]{A structure theorem on doubling measures with different bases}
\author{Theresa C. Anderson and Bingyang Hu}

\address{Theresa C. Anderson: Department of Mathematics, Purdue University, 150 N. University St., W. Lafayette, IN 47907, U.S.A.}%
\email{tcanderson@purdue.edu}

\address{Bingyang Hu: Department of Mathematics, Purdue University, 150 N. University St., W. Lafayette, IN 47907, U.S.A.}%
\email{hu776@purdue.edu}

\begin{abstract}
In this paper, we prove a structure theorem for the infinite union of $n$-adic doubling measures via techniques which involve far numbers. Our approach extends the results of Wu in 1998, and as a by product, we also prove a classification result related to normal numbers. 
\end{abstract}
\date{\today}

\thanks{The first author is funded by NSF DMS 1954407 in Analysis and Number Theory.}

\maketitle

\section{Introduction}
The goal of this paper is to establish a structure theorem for $n$-adic doubling measures (including dyadic doubling measures $n=2$), a key area of importance in harmonic analysis and related fields, as discussed below.  As a byproduct of our work, we prove a statement involving a classification related to \emph{normal numbers}, a popular topic in number theory, that has nothing to do with measures.

We begin with recalling several definitions. A \emph{doubling measure} $\mu$ is a measure for which there exists a positive constant $C$ such that for every interval $I \subset \R, \mu(2I) \le C\mu(I)$, where $2I$ is the interval which shares the same midpoint of $I$ and twice the length of $I$.

We will focus on $n$-adic intervals, $ n \in \N$: 

$$
I=\left[ \frac{k-1}{n^\ell}, \frac{k}{n^{\ell}} \right), \quad \ell, k \in \Z. 
$$ 
The $n$-adic children of the interval defined above are 
\begin{equation} \label{20201119eq01}
I_j=\left[\frac{k-1}{n^{\ell}}+\frac{j-1}{n^{\ell+1}}, \frac{k-1}{n^{\ell}}+\frac{j}{n^{\ell+1}} \right), \quad 1 \le j \le n. 
\end{equation} 
A measure $\mu$ is a \emph{$n$-adic doubling measure on $\R$} if there exists a positive constant $C$, independent of all parameters, such that for any $n$-adic $I$,
\begin{equation} \label{20201121eq11}
\frac{1}{C} \le \frac{\mu(I_{j_1})}{\mu(I_{j_2})} \le C, \quad 1 \le j_1, j_2 \le n,
\end{equation}
where both $I_{j_1}$ and $I_{j_2}$ are some $n$-adic children of $I$ defined in \eqref{20201119eq01}. We denote the collection of all $n$-adic doubling measures by $\calD_n$. 

Doubling measures are a classical topic in analysis and they have many deep connections to other fields, such as PDE.   The well-known Muckenhoupt $A_p$ weights and reverse H\"older weights are all automatically doubling, and the doubling property is the key point of the definition of spaces of homogeneous type.  For more background and applications of doubling measures (particularly from a more modern perspective), see, for example \cite{Conde}, \cite{DCU}, \cite{HK}, \cite{LN}, \cite{LPW}, \cite{TM}, \cite{P}, \cite{PWX}. In particular, dyadic doubling measures have been central to a rich area of study, see, for example \cite{FKP}.  The study of the union as well as the intersection of $n$-adic doubling measures is a recent topic. It dates back to Wu's work \cite{Wu1, Wu2} in 90s on using the null set to characterize the $n$-adic doubling measures, in particular, she proved the following result. 

\begin{thm} [\cite{Wu1}] \label{Wuthm}
For any two integers $A$ and $B$ greater than $2$, $\frac{\log A}{\log B}$ is irrational if and only if $\calD_A \not\subseteq \calD_B$ and $\calD_B \not\subseteq \calD_A$. 
\end{thm}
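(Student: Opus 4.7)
My plan is to treat the two directions separately, starting with the easier one: ``$\log A/\log B\in\Q$ implies $\calD_A=\calD_B$''. Writing $\log A/\log B=p/q$ gives $A^q=B^p$, so it suffices to prove the auxiliary identity $\calD_n=\calD_{n^k}$ for every integer $n\ge 2$ and every $k\ge 1$; applied twice, this yields $\calD_A=\calD_{A^q}=\calD_{B^p}=\calD_B$ and both non-inclusions in the theorem fail. The containment $\calD_n\subseteq\calD_{n^k}$ is a direct chaining of \eqref{20201121eq11} across $k$ consecutive $n$-adic generations. For the reverse direction, given $I$ $n$-adic at level $\ell$ with children $I_{j_1},I_{j_2}$, I write $\ell+1=km+r$ with $0\le r<k$, embed $I$ in a common $n^k$-adic ancestor $\hat I$, and express each $I_{j_i}$ as a disjoint union of equally many consecutive $n^k$-adic children (or grandchildren, when $r=0$) of $\hat I$; these are mutually comparable by $n^k$-adic doubling, which bounds $\mu(I_{j_1})/\mu(I_{j_2})$ in terms of $k$ and the $n^k$-doubling constant of $\mu$.

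Now assume $\log A/\log B\notin\Q$; by the symmetry of the conclusion it is enough to construct one $\mu\in\calD_A\setminus\calD_B$. I would take $\mu$ to be a cascade measure on $[0,1]$: fix $\eps\in(0,1)$, and for each $A$-adic interval $I$ choose a weight vector $w_I=(w_{I,1},\dots,w_{I,A})$ with $w_{I,j}\in\bigl[\tfrac{1-\eps}{A},\tfrac{1+\eps}{A}\bigr]$ and $\sum_j w_{I,j}=1$; then set $\mu([0,1))=1$ and $\mu(I_j)=w_{I,j}\,\mu(I)$ recursively. Any such $\mu$ automatically lies in $\calD_A$ with doubling constant $(1+\eps)/(1-\eps)$, independently of the specific choice of weights.

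The actual game is to choose the weights $w_I$ so that $\mu\notin\calD_B$. Here the irrationality of $\log A/\log B$ enters via Weyl equidistribution: for every tolerance $\tau>0$ and every depth $N\in\N$ there exist arbitrarily large levels $\ell,m$ together with an $A$-adic interval $I$ of level $\ell-N$ and a $B$-adic interval $J$ of level $m$ such that the two $B$-adic children $J^{(1)},J^{(2)}$ of $J$ lie, except for a boundary strip of relative length $\tau$, inside two distinct $A$-adic descendants $I^{(1)},I^{(2)}$ of $I$ at depth $N$. Along a sparse increasing sequence of scales $(\ell_i,m_i,I_i,J_i)$ realising this geometry with $\tau_i\to 0$ and $N_i\to\infty$, I would bias the weights maximally inside $I_i$ so that the chain leading to $I_i^{(1)}$ acquires a factor $\bigl(\tfrac{1+\eps}{1-\eps}\bigr)^{N_i}$ over the chain leading to $I_i^{(2)}$, leaving $w_I\equiv 1/A$ at every $A$-adic $I$ not on one of these selected paths. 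Provided $\tau_i\to 0$ fast enough, this transfers to $\mu(J_i^{(1)})/\mu(J_i^{(2)})\to\infty$, violating \eqref{20201121eq11} at the $B$-adic level.

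The main obstacle, as I see it, is the quantitative interplay between the depth $N_i$, the tolerance $\tau_i$, and the sparsity of the selected scales: one must verify that the $\tau_i$-fraction of $B$-adic mass escaping into the ``wrong'' $A$-adic descendants does not wash out the $\bigl((1+\eps)/(1-\eps)\bigr)^{N_i}$-sized signal, and that biasing at one scale does not covertly cancel biasing at another. An effective irrationality estimate on $\log A/\log B$---a Diophantine-type lower bound on $|q\log A-p\log B|$---is what delivers the needed room, and this is presumably where the ``far number'' framework advertised in the abstract enters the authors' refinement of Wu's construction.
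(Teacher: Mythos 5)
Your forward direction (rational ratio $\Rightarrow \calD_A=\calD_B$) is correct and matches the paper's reduction at the start of Section~2, resting on the identity $\calD_n=\calD_{n^k}$; the sketch via chaining and passing to a common $n^k$-adic ancestor is sound. For the converse you take a genuinely different route: the paper builds one fixed deterministic $a$/$b$-cascade, then locates bad $B$-adic siblings near explicit points ($\ell+\frac1n$ in Section~2, $\mathcal{P}_\ell$ in Section~3) using the far-number dichotomy and the good-pair reduction --- no equidistribution appears anywhere. Your flexible cascade with weights in a $(1\pm\eps)/A$ band, biased at scales selected by Weyl/Kronecker, is instead close in spirit to Wu's original Kronecker argument, which the paper deliberately moves away from (see Remark items 2--3).

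The sketch also has real gaps, one of them structural. For the bias to accumulate to $\left(\frac{1+\eps}{1-\eps}\right)^{N_i}$, the paths from $I_i$ down to $I_i^{(1)}$ and $I_i^{(2)}$ must diverge at the \emph{first} generation, i.e.\ their shared $A$-adic boundary must sit at level exactly $(\ell_i-N_i)+1$. Since $J_i^{(1)},J_i^{(2)}$ are adjacent $B$-adic siblings each landing mostly inside one of them, $I_i^{(1)},I_i^{(2)}$ are forced to be adjacent, and adjacent level-$\ell_i$ intervals may just as well be $A$-adic \emph{siblings}, in which case $A$-adic doubling already pins $\mu(I_i^{(1)})/\mu(I_i^{(2)})\le(1+\eps)/(1-\eps)$ no matter how you bias below. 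Your equidistribution statement as phrased does not exclude this. Even after repairing it, for your own computation to survive you need $\tau_i\lesssim\left(\frac{1-\eps}{1+\eps}\right)^{N_i}$ (otherwise the $\tau_i$-strip of $J_i^{(2)}$ that spills into the heavy interval dominates $\mu(J_i^{(2)})$), so the alignment claim becomes a simultaneous requirement --- $B^{m_i+1}\approx A^{\ell_i}$ \emph{and} a level-$(\ell_i-N_i+1)$ $A$-adic boundary within $\tau_i B^{-m_i-1}$ of a level-$(m_i+1)$ $B$-adic boundary --- which does not follow from Weyl equidistribution alone and would need a separate Diophantine argument. Finally, the ``far number framework'' you invoke at the close is not an effective irrationality measure: in the paper it is the qualitative dichotomy that drives the case split, and the non-far case (precisely when every prime factor of $n$ divides $m$) is what forces the good-pair machinery of Section~3 --- a branch of the argument your outline does not see.
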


A natural question to ask is whether we can extend the above result to intersection or unions of $n$-adic measures. Here is some recent work along this line of research. 

\begin{enumerate}
    \item [1.] In unpublished work of Jones, he proved that any finite intersection of the \emph{prime $BMO$ function classes} is never equal to the full $BMO$ function class (for more details about this see \cite{Krantz}).  Since then, a folkloric question has been: ``does an analogue of Jones's result hold for $p$-adic doubling measures?"  This proved a difficult extension that was answered recently by \cite{BMW} and \cite{AH2}, described in items 2. and 3. below.
    
    
    \medskip
    
    \item [2.]  The first work which extends Wu's type of results to the union or the intersections of $D_n$'s was due to Boylan, Mills and Ward \cite{BMW}, which also was the first step to answering the folkloric analogue of Jones's question above;
    
    \medskip
    
    \item [3.] In our recent work \cite{AH2}, we answer the analogue of Jones's question for measures by proving that for any finite family of primes $p_i$, there exists a measure that is $p_i$-adic doubling yet not doubling.  Additionally we extend this result to the setting of Muckenhoupt $A_p$ and reverse H\"older weights.
 \end{enumerate}
 
 \medskip
 
 The results in \cite{AH2} left several open question (both implicit and explicitly stated).  In this paper, we completely resolve one of these by proving the following structure theorem for unions of $n$-adic measures. Here is the main result.

\begin{thm} \label{mainthm}
Let $\{n_i\}$ and $\{m_j\}$ be any sequences of integers greater than $2$. Then the following statements are equivalent:
\begin{enumerate}
\item [(1).] There exists some $i \ge 1$ such that
$$
\frac{\log n_i}{\log m_j} \in \R \backslash \Q, \quad \forall j \ge 1; 
$$
\item [(2).]
$$
 \bigcup\limits_{i \ge 1} \calD_{n_i}  \not\subseteq \bigcup\limits_{j \ge 1} \calD_{m_j}.
$$
\end{enumerate}
\end{thm}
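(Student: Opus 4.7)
My plan is to prove the two implications separately, with the bulk of the work falling on $(1)\Rightarrow(2)$.

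\emph{Direction $(2)\Rightarrow(1)$.} I would argue by contrapositive: assuming (1) fails, for every $i$ there is $j=j(i)$ with $\log n_i/\log m_{j(i)}\in\Q$. The first step is to record a sharpening of Theorem \ref{Wuthm}: when $\log A/\log B\in\Q$, both $A$ and $B$ are integer powers of a common base $C$, say $A=C^a$, $B=C^b$, and a direct partition-refinement argument shows $\calD_{C^k}=\calD_C$ for every $k\ge 1$ (one inclusion is iterated $C$-adic doubling applied to $C^k$-adic children, the other writes any $C$-adic interval at level $\ell=qk+r$ as a fixed number of consecutive $C^k$-adic descendants of its $C^k$-adic ancestor at level $q$, and compares $C$-adic children through those $C^k$-adic pieces). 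Hence $\calD_A=\calD_B$. Granting this lemma, $\calD_{n_i}\subseteq\calD_{m_{j(i)}}$ for every $i$, so $\bigcup_i\calD_{n_i}\subseteq\bigcup_j\calD_{m_j}$, which is the negation of (2).

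\emph{Direction $(1)\Rightarrow(2)$.} Fix the index $i$ supplied by (1) and set $n:=n_i$. The goal is to exhibit a single measure $\mu\in\calD_n\setminus\bigcup_j\calD_{m_j}$. For each individual $j$, Wu's argument (as refined in \cite{AH2}) produces an $n$-adic doubling measure failing $m_j$-adic doubling by placing a localized, multiplicative perturbation of Lebesgue measure near a \emph{far number} for the pair $(n,m_j)$ — a point whose $m_j$-adic orbit stays uniformly separated from $n$-adic endpoints at the relevant scales, and which exists precisely because $\log n/\log m_j$ is irrational. My plan is to carry this out for all $j$ simultaneously by a diagonal construction: pick pairwise-disjoint $n$-adic intervals $I_1,I_2,\dots\subset[0,1)$ of rapidly increasing depths $\ell_1\ll\ell_2\ll\cdots$, install inside each $I_j$ a rescaled copy of the Wu perturbation witnessing failure of $m_j$-adic doubling, and set $\mu$ equal to Lebesgue measure on the complement of $\bigcup_j I_j$. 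Disjointness of the $I_j$'s means that inside each $I_j$ the measure exactly reproduces a known Wu measure, so the $m_j$-adic failure is inherited, and outside them $\mu$ is Lebesgue; the only new $n$-adic ratios to check are those relating $I_j$ to its $n$-adic siblings, and these stay bounded provided each perturbation is arranged to preserve the total mass $|I_j|$.

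\emph{Where the work is.} The main technical obstacle is quantitative control of the far numbers: one must produce, inside a prescribed $n$-adic interval and for every $j$, an $m_j$-adic witness to non-doubling \emph{with $n$-adic doubling constant bounded independently of $j$}, so that the combined $\mu$ really lies in $\calD_n$. Concretely this amounts to a simultaneous equidistribution statement about the fractional parts of $\ell\log n/\log m_j$ — exactly the content of the far-number theory — pushed from Wu's one-parameter setting and the finite-prime setting of \cite{AH2} to countable families by a quantitative diagonal bookkeeping. I expect the announced classification of normal numbers to fall out as a by-product once the far-number hypothesis is reformulated in terms of digit distributions in the bases $m_j$.
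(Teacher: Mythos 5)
Your $(2)\Rightarrow(1)$ direction matches the paper's: if every $\log n_i/\log m_{j(i)}$ is rational, then $n_i$ and $m_{j(i)}$ are powers of a common base, and $\calD_{m}=\calD_{m^a}$ gives $\calD_{n_i}=\calD_{m_{j(i)}}$, contradicting~(2). Fine.

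The $(1)\Rightarrow(2)$ direction is where the proposal diverges sharply and, as written, has two real gaps.

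First, your far-number hypothesis is stronger than what irrationality buys you, and this is precisely the crux the paper has to work around. You write that the far-number witness ``exists precisely because $\log n/\log m_j$ is irrational,'' but the paper explicitly points out this is false: $\log 12/\log 18$ is irrational, yet $1/12$ is not $18$-far (indeed $1/12=27/18^2$), and symmetrically $1/18$ is not $12$-far. So for such pairs the far-number mechanism simply does not engage, and the entire second half of the paper is devoted to an independent argument (the ``good pair''/``semi-good pair'' machinery, reducing via $\calD_m=\calD_{m^a}$ to the case where the equation $k/m^{\ell-1}=1/n^\ell$ is unsolvable). Your plan has no analogue of this case, so the proof would stall on exactly the pairs where the far-number dichotomy fails.

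Second, the diagonal-rescaling scheme does not preserve the structure you need. You propose to fit a rescaled copy of a Wu-type perturbation inside a depth-$\ell_j$ $n$-adic interval $I_j\subset[0,1)$ and claim it ``inherits'' failure of $m_j$-adic doubling. But the rescaling factor $n^{-\ell_j}$ is not a power of $m_j$, so the image of an $m_j$-adic interval under that contraction is generically not $m_j$-adic; the intervals that witnessed $m_j$-adic non-doubling before the rescaling no longer sit in the $m_j$-adic grid afterwards, and the blow-up argument doesn't transfer. (Translating by integers, rather than rescaling, would preserve the $m_j$-adic structure, but then your intervals are no longer disjoint subsets of $[0,1)$ and the bookkeeping changes.) Beyond this technical point, the diagonal scheme also misses the paper's central structural idea: the constructed measure is \emph{independent of $\{m_j\}$} — one universal redistribution on $[\ell,\ell+1)$ (pushing weights $a,b$ to the leftmost and rightmost $n$-adic children, iterated $\ell+1$ times) is then shown to fail $m$-adic doubling for \emph{every} $m$ with $\log n/\log m\notin\Q$, splitting into the far-number case (localizing at $\ell+1/n$) and the good-pair case (localizing at $\lfloor\ell\log m/\log n\rfloor-1+1/n^\ell$). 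That independence from $\{m_j\}$ is exactly what makes a countable union tractable without any diagonalization; building a separate perturbation for each $j$ is both unnecessary and the source of the difficulties above.
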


\begin{rem}
Here are some remarks for Theorem \ref{mainthm}.
\begin{enumerate}
    \item [1.] Clearly, Theorem \ref{mainthm} generalizes Theorem \ref{Wuthm}; 
    
    \medskip
    
    \item [2.] 
 The proof of Theorem \ref{Wuthm} by Wu is based on Kronecker's theorem on irrational numbers, namely, for $r $ being irrational, the set $\{kr \ (\textrm{mod} \ 1): k \in \Z\}$ is dense on $[0, 1)$. To our best knowledge, Wu's approach seems difficult to extend to the situation when we try to understand the behavior of the union or the intersection of $\calD_n$. More precisely, for $x \in \R$ with $\|x\|>\frac{1}{10}$, where $\|x\|$ denotes the distance form $x$ to the nearest integer, let
    $$
    \calJ:=\left\{j: j \in \N, \ \|jx\|> \frac{1}{20} \right\}
    $$
     The proof in \cite{Wu1} relies heavily on defining corresponding $\calJ$ for a specially chosen sequence of $x_i\in \R$ and noting that for all these $x_i$ (where $\|x_i\| >\frac{1}{10}$), $\calJ^c$ contains no consecutive integers.  To establish a result as in Theorem \ref{mainthm}, we likely would need to do the same for the sets
    $$
    \widetilde{\calJ}:=\left\{j: j \in \N, \ \|jx_{1, i}\|, \ \|jx_{2, i}\|>\frac{1}{20} \right\}
    $$
    where $\|x_{1, i}\|, \|x_{2, i}\|>\frac{1}{10}$, which would in turn involve showing that unions of sets like $\calJ^c$ do not contain consecutive integers.  This appears quite challenging.  The rest of Wu's approach relies on an intricate construction that uses several highly technical lemmas that she had developed.  This allowed for a precise, explicit construction, but made direct generalizations of her techniques even less amenable.  Hence we decided to take a completely different approach;

\medskip

    \item [3.]  Our approach is different from Wu and based a systematic study of \emph{far numbers} on $\R$ (see, \cite{AHJOW}). The advantage of our method is two-fold. First, our approach is simpler and less technically reliant than the argument in \cite{Wu1}, avoiding the construction of certain auxiliary $n$-adic doubling measures and iteration arguments. Second, we are able to  
    extend this type of structure theorem to the union of $n$-adic measures (namely, Theorem \ref{mainthm}); 
    
\medskip

    \item [4.]  Note that $\calD_m = \calD_{m^a}$ for any integer $a \geq 1$.  This fact follows easily from the definition, but will contribute to an important reduction step in our proof (see Section \ref{20201207sec01}).  Note that this fact can be thought of as a version of \emph{Hensel's lemma} in analysis: that is, that $m$-adic doubling measures (and in particular $p$-adic doubling measures for any prime $p$) can be ``lifted" to $m^a$-adic doubling measures for $a \geq 1$.     
\end{enumerate}
\end{rem}

We prove this via an explicit construction that heavily weaves in number theory.

\begin{rem}
An \emph{important} observation for the construction (see the next section for details) is that all the intervals involved only depend on $\{n_i\}$ and are independent of $\{m_j\}$. This is very different from
\begin{enumerate}
\item [1.] The null set construction in \cite{Wu1}; 

\medskip

\item [2.] The exotic measures in \cite{AH2, BMW}.  
\end{enumerate}
This is why we are allowed to prove a structure theorem for infinite unions of $n$-adic measures instead of simply finite ones.. 
\end{rem}

To begin with, we recall the definition of \emph{$n$-far numbers}. 

\begin{defn} \label{defn01}
A real number $\del$ is \emph{$n$-far} if the distance from $\del$ to each given rational $k/n^m$ is at least some fixed multiple of $1/n^m$, where $m \ge 0$ and $k \in \Z$. That is, if there exists $C>0$ such that
\begin{equation}
\label{C delta}
   \left| \del-\frac{k}{n^m} \right| \ge \frac{C}{n^m}, \quad \forall m  \ge 0, \ k \in \Z, 
\end{equation}
where $C$ may depend on $\del$ but independent of $m$ and $k$. Note also that $0<C<1$. 
\end{defn}

\begin{rem}
Note that in the first condition in Theorem \ref{normalcla}, we cannot interchange the role of $m$ and $n$. Indeed, for example, $\frac{1}{6}$ is $2$-far while $\frac{1}{2}$ is not $6$-far. 
\end{rem}

We need the following result from \cite{AHJOW}.

\begin{prop} \label{20201120prop01}
All rationals except those of the form $\frac{k}{n^m}, m \ge 0, k \in \Z$ are $n$-far numbers. 
\end{prop}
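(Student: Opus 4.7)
The plan is to pass to the reduced form $\delta = p/q$ (with $\gcd(p,q)=1$ and $q \ge 1$) and observe that the hypothesis ``$\delta$ is not of the form $k/n^m$'' translates into a clean divisibility statement about $q$. Indeed, writing $k/n^m = p/q$ forces $kq = pn^m$; combined with $\gcd(p,q)=1$ this gives $q \mid n^m$. Hence the hypothesis is equivalent to: $q \nmid n^m$ for every $m \ge 0$, which is the same as saying $q$ has at least one prime factor that does not divide $n$.

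Next, for arbitrary $m \ge 0$ and $k \in \Z$, I would put the difference over a common denominator,
\[
\left| \del - \frac{k}{n^m} \right| \;=\; \frac{|pn^m - kq|}{qn^m},
\]
and note that the numerator is a non-negative integer. The crux is to show it is never zero. If $pn^m - kq = 0$, then $q \mid pn^m$, and since $\gcd(p,q) = 1$ this gives $q \mid n^m$, contradicting the reduction made in the previous paragraph. Therefore $|pn^m - kq| \ge 1$, and consequently
\[
\left| \del - \frac{k}{n^m} \right| \;\ge\; \frac{1}{qn^m} \;=\; \frac{1/q}{n^m}.
\]
So the constant $C = 1/q \in (0,1)$ (choosing $q \ge 2$, which is automatic when $\del$ is non-integer; integers are handled trivially as $\del = k/n^0$, but these are excluded) witnesses $n$-farness.

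The only step that requires any thought is the equivalence at the start, i.e.\ correctly identifying the set of ``forbidden'' rationals with the set of rationals whose reduced denominator divides some power of $n$; the remainder is a one-line divisibility argument and a common-denominator computation, so I do not anticipate any genuine obstacle. I would close with a brief remark that the constant $C$ produced this way depends only on the reduced denominator $q$ of $\del$, which is the natural dependence given Definition \ref{defn01}.
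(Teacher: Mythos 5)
Your proof is correct and complete. The paper itself does not reprove this proposition (it cites it from \cite{AHJOW}), so there is no in-paper argument to compare against, but your route is the natural one: reduce $\delta=p/q$ to lowest terms, note that the excluded rationals are precisely those with $q\mid n^m$ for some $m$, and then use $\left|\delta-\tfrac{k}{n^m}\right|=\tfrac{|pn^m-kq|}{qn^m}\ge\tfrac{1}{qn^m}$ since the integer numerator cannot vanish (else $q\mid n^m$), giving the uniform constant $C=1/q$. The only point worth stating explicitly is the one you flagged at the end: $q\ge 2$ is automatic because $q=1$ would make $\delta$ an integer, hence of the excluded form $k/n^0$, so $C=1/q$ indeed lies in $(0,1)$ as Definition \ref{defn01} requires.
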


The rest of the paper is devoted to prove Theorem \ref{mainthm}.

\bigskip
\section{Proof of the main result: Part I.}
We begin with observing that it is suffices to check that (2) implies (1). Indeed, assuming (1) fails, we see that for each $i \ge 1$, there exists some $j_i \ge 2$, such that $\frac{\log n_i}{\log m_{j_i}} \in \Q$, which means there exists some $\mathfrak n_i \in \Z$ such that both $n_i$ and $m_{j_i}$ are some powers of $\mathfrak n_i$. This further implies for each $i \ge 1$, $\calD_{\mathfrak n_i}= \calD_{n_i}=\calD_{m_{j_i}}$ (this fact is easy to see using the definition), which contradicts (2). 

In the rest of this paper, we prove (1) implies (2). The desired result clearly follows from the following result.

\begin{thm} \label{20201121thm01}
Let $n:=n_i$ and $\{m_j\} \subseteq \Z$ satisfy the assumption in (1). Then there exists a measure $\mu$ on $\R$, such that $\mu$ is $n$-adic doubling but not $m_j$-adic doubling for any $j \ge 1$. 
\end{thm}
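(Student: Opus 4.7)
The plan is to build $\mu$ by prescribing its values on the $n$-adic filtration so that $n$-adic doubling holds by construction, and then to leverage the irrationality of each $\log n/\log m_j$ to force $m_j$-adic doubling to fail. The observation $\calD_m=\calD_{m^a}$ allows replacing each $m_j$ by a convenient power without changing the hypothesis on log ratios, and supports the expectation emphasized in the remark preceding the theorem: the construction will depend only on $n$ and on a single auxiliary $n$-far number $\del$.

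First I would fix an $n$-far $\del\in(0,1)$; by Proposition \ref{20201120prop01} any rational not of the form $k/n^m$ is available, and the additional properties required of $\del$ to interact badly with every $m_j$ will be arranged using $\log n/\log m_j\notin\Q$. I would then define $\mu$ as a martingale on the $n$-adic filtration: for each $n$-adic interval $I$ with children $I_1,\dots,I_n$ as in \eqref{20201119eq01}, I assign weights $p_1^I,\dots,p_n^I>0$ summing to $1$, with $\mu(I_j)=p_j^I\,\mu(I)$. The weights are chosen uniformly bounded away from $0$ as $I$ varies, so that \eqref{20201121eq11} holds with a constant $C$ independent of $I$, and hence $\mu\in\calD_n$. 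The biasing pattern is keyed to the chain $I_0\supset I_1\supset I_2\supset\dots$ of $n$-adic intervals through $\del$: at a sparse, carefully chosen subsequence of scales, the child containing $\del$ receives a distinct share from the other $n-1$ children; at all remaining scales the mass is split uniformly.

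To show $\mu\notin\calD_{m_j}$ for each fixed $j$, I would produce, for arbitrarily large $p$, an $m_j$-adic interval $J$ of length $m_j^{-p}$ whose two $m_j$-adic children flanking $\del$ have mass ratio tending to $\infty$. Since $\log n/\log m_j\in\R\backslash\Q$, Kronecker's theorem furnishes pairs $(\ell,p)$ along which $m_j^{-p}\asymp n^{-\ell}$ and $\del$ lies within a fraction $o(1)$ of $m_j^{-p}$ of an internal $m_j$-adic boundary of $J$. The lower bound \eqref{C delta} keeps $\del$ a fixed positive fraction of $n^{-\ell}$ inside $\partial I_\ell$, so the cumulative bias of $\mu$ accumulated along the chain down to scale $\ell$ is genuinely localized on one side of that nearby $m_j$-adic boundary. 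A direct mass computation then shows that the ratio of $\mu$ on the two $m_j$-adic siblings flanking the boundary diverges along the Kronecker subsequence, violating \eqref{20201121eq11} for every constant.

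The main obstacle, which is genuine, is that a self-similar biasing of $\mu$ at \emph{every} $n$-adic scale produces only a power-law density $d\mu/dx\asymp\dist(x,\del)^{-\gam}$, under which $m_j$-adic doubling in fact \emph{holds}; one checks easily that $\mu(J)$ and $\mu(J')$ are then comparable for every $m_j$-adic sibling pair. The asymmetry must therefore be injected at a sparse subsequence of $n$-adic scales, sparse enough that no single $m_j$-adic scale averages out the bias, yet distributed densely enough that for every $j$ the Kronecker-type argument above eventually catches a biased $n$-adic scale in alignment with some $m_j$-adic scale. Achieving this balance simultaneously for the infinite family $\{m_j\}$ while keeping $\mu$ dependent only on $n$ and $\del$ is the crux of the proof, and is where the hypothesis $\log n/\log m_j\notin\Q$ for every $j$ enters in exactly the same way for each $j$.
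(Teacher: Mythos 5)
Your proposal diverges sharply from the paper's strategy and, more importantly, leaves a real gap that you yourself flag but do not close. The paper constructs $\mu$ explicitly and globally: on each interval $[\ell,\ell+1)$ it biases the leftmost and rightmost $n$-adic children by $a$ and $b$ respectively and iterates this \emph{exactly $\ell+1$ times}, so that near the point $\ell+\tfrac{1}{n}$ there is a density jump of size $ab^{\ell}$ versus $1$. The test intervals are then $m_j$-adic intervals near $\ell+\tfrac{1}{n}$ (or near $\mathcal{P}_\ell=\lfloor \ell\log m/\log n\rfloor-1+n^{-\ell}$ in the non-far case), and the case split is governed by whether $\tfrac{1}{n}$ is $m_j$-far, together with a purely number-theoretic ``good pair'' reduction via $\calD_m=\calD_{m^a}$. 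No Kronecker-type equidistribution is invoked --- indeed the paper's introduction (Remark 2 after Theorem \ref{mainthm}) explicitly argues that Wu's Kronecker-based method does not extend to unions, which is precisely the machine you propose to use.

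The concrete difficulties with your plan are twofold. First, a martingale measure biased along the $n$-adic chain through a single fixed $n$-far $\delta$ does not obviously produce a diverging ratio between two $m_j$-adic siblings flanking a boundary near $\delta$: the cumulative bias $\prod_k(np_k)$ is concentrated on the nested intervals $I_\ell\ni\delta$, and an $m_j$-adic sibling pair $J_1,J_2$ of comparable scale only samples the bias \emph{difference} across one or two $n$-adic levels, which is bounded --- you would need to prove that the ``direct mass computation'' you invoke actually escapes this, and it is not clear it does. Second, and more fundamentally, you identify the crux yourself: the sparse set of biased scales must simultaneously (i) be sparse enough that no $m_j$-adic scale averages the bias away, and (ii) align, via Kronecker, with every $m_j$ in the infinite family, while remaining chosen independently of $\{m_j\}$. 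You do not say how to choose such a set, and there is no reason a single sparse sequence can be made to work against an arbitrary infinite family $\{m_j\}$ with varying equidistribution rates. The paper sidesteps this entirely by making the bias depth grow with $\ell$ and by choosing the \emph{test point} (not the measure) to depend on $m_j$, which is what makes a uniform construction possible. As it stands, your outline is a research program rather than a proof, and its central step is precisely the one the paper's authors judged intractable.
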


\subsection{Construction of $\mu$.} \label{20201202sub01} To begin with, we simply let the restriction of $\mu$ on $(-\infty, 0)$ be the Lebesgue measure, and we will re-distribute the weights on $[0, \infty)$.  Take any $a, b>0$ with 
\begin{equation} \label{20201120eq01}
a+b=2 \quad \textrm{and} \quad 0<a<1<b. 
\end{equation}
Now on each $\ell \in \N$, we re-distribute the weight on the interval $[\ell, \ell+1)$ as follows.

\medskip

\textit{Step I:} Note that $[\ell, \ell+1)$ is a $n$-adic interval. We assign the weight $a$ to its leftmost $n$-adic child and $b$ to its rightmost $n$-adic child, that is
$$
\mu \big |_{\left[ \ell, \ell+\frac{1}{n} \right)}=a dx \quad \textrm{and} \quad \mu \big |_{\left[\ell+\frac{n-1}{n}, \ell+1 \right)}=b dx. 
$$
While for all other $n$-adic children, the weights there remain unchanged; 

\medskip

\textit{Step II:} Repeat the procedure $\ell+1$ times in Step I to all the $n$-adic children whose weights have been redistributed from the previous step. For example, for the $n$-adic child $\left[\ell, \ell+\frac{1}{n} \right)$ which has been selected in Step I, we let
$$
d\mu \big |_{\left[ \ell, \ell+\frac{1}{n^2} \right)}=(a^2)dx \quad \textrm{and} \quad d\mu \big |_{\left[\ell+\frac{n-1}{n^2}, \ell+\frac{1}{n} \right)}=(ab) dx, 
$$
and
$$
d\mu \big |_{\left[\ell+\frac{j}{n^2}, \ell+\frac{j+1}{n^2} \right)}=a dx, \quad j \in \{1, \dots, n-2\}. 
$$
\medskip

We plot the measure $\mu$ when $n=3$ (see, Figure \ref{20201121Fig01}). 

\medskip

\begin{figure}[ht]
\begin{tikzpicture}[scale=5.5]
\draw (-.15,0) -- (2.1,0); 
\fill (0,0) circle [radius=.2pt];
\fill (1,0) circle [radius=.2pt];
\fill (2, 0) circle [radius=.2pt]; 
\fill (1/3, 0) circle [radius=.2pt];
\fill (2/3, 0) circle [radius=.2pt];
\fill (10/9, 0) circle [radius=.2pt];
\fill (11/9, 0) circle [radius=.2pt];
\fill (4/3, 0) circle [radius=.2pt];
\fill (5/3, 0) circle [radius=.2pt];
\fill (16/9, 0) circle [radius=.2pt];
\fill (17/9, 0) circle [radius=.2pt];
\fill (0, -0.01) node [below] {{\footnotesize $0$}};
\fill (1, -0.01) node [below] {{\footnotesize $1$}};
\fill (2, -0.01) node [below] {{\footnotesize $2$}};
\fill (1/3, -0.01) node [below] {{\footnotesize $\frac{1}{3}$}};
\fill (2/3, -0.01) node [below] {{\footnotesize $\frac{2}{3}$}};
\fill (10/9, -0.01) node [below] {{\footnotesize $\frac{10}{9}$}};
\fill (11/9, -0.01) node [below] {{\footnotesize $\frac{11}{9}$}};
\fill (4/3, -0.01) node [below] {{\footnotesize $\frac{4}{3}$}};
\fill (5/3, -0.01) node [below] {{\footnotesize $\frac{5}{3}$}};
\fill (16/9, -0.01) node [below] {{\footnotesize $\frac{16}{9}$}};
\fill (17/9, -0.01) node [below] {{\footnotesize $\frac{17}{9}$}};
\fill (-.075, .01) node [above] {{\footnotesize {\color{blue} $1$}}}; 
\fill (1/6, .01) node [above] {{\footnotesize {\color{blue} $a$}}};
\fill (1/2, .01) node [above] {{\footnotesize {\color{blue} $1$}}}; 
\fill (5/6, .01) node [above] {{\footnotesize {\color{blue} $b$}}};
\fill (19/18, .01) node [above] {{\footnotesize {\color{blue} $a^2$}}}; 
\fill (7/6, .01) node [above] {{\footnotesize {\color{blue} $a$}}};
\fill  (23/18, .01) node [above] {{\footnotesize {\color{blue} $ab$}}}; 
\fill (1.5, .01) node [above] {{\footnotesize {\color{blue} $1$}}}; 
\fill (31/18, .01) node [above] {{\footnotesize {\color{blue} $ba$}}};
\fill (33/18, .01) node [above] {{\footnotesize {\color{blue} $b$}}};
\fill (35/18, .01) node [above] {{\footnotesize {\color{blue} $b^2$}}}; 
\fill (2.05, .01) node [above] {{\footnotesize {\color{blue} $a^3$}}};
\end{tikzpicture}
\caption{$\mu$ with $n=3$, where the blue parts refer to the weights associated to each interval.}
\label{20201121Fig01}
\end{figure}
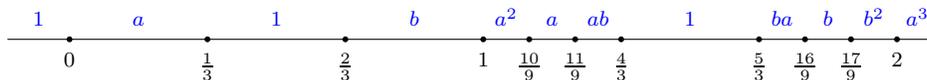

\medskip

We have the following observation.

\begin{lem}
$\mu$ is $n$-adic doubling but not doubling.
\end{lem}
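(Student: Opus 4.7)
The plan hinges on the identity $a+(n-2)+b=n$ (equivalent to $a+b=2$), which ensures that each step of the construction \emph{preserves} the total $\mu$-measure of the interval being refined. A routine induction then yields $\mu([\ell,\ell+1))=1$ for every $\ell\in\N$, and more generally shows that the $\mu$-measure of any block produced by the construction equals its assigned density multiplier times its length.

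The harder half is $n$-adic doubling, and the main obstacle is to confirm that the $n$-adic doubling constant remains bounded \emph{uniformly in the depth of $I$}, even though the construction recurses arbitrarily deeply as $\ell$ grows. I would proceed by case analysis on an $n$-adic interval $I$. If $I\subseteq(-\infty,0)$ then $\mu$ is Lebesgue on $I$ and its children have equal measure. If $I\subseteq[0,\infty)$ with $|I|>1$, then $I=[jn^m,(j+1)n^m)$ with $m\ge 1$ and $j\ge 0$, so each of its $n$ children is a disjoint union of $n^{m-1}$ of the unit intervals $[\ell,\ell+1)$, each of measure $1$; the children again have equal measure. The substantive case is $I\subseteq[\ell,\ell+1)$ for some $\ell\ge 0$: by tracking the construction one sees that either $I$ lies inside an ``inactive'' block (so $\mu$ has constant density on $I$ and its children have equal measure) or $I$ coincides with an ``active'' block at some level $k\le\ell$ with density multiplier $w$. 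In the latter case the leftmost and rightmost children carry multipliers $wa$ and $wb$, while the $n-2$ middle children carry $w$, so by weight preservation the children's $\mu$-measures sit in the ratio $a:1:\cdots:1:b$, bounded by $b/a$ since $0<a<1<b$. The identity $a+b=2$ is precisely what fixes the middle children at the parent's density and prevents any multiplicative drift across scales.

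For the failure of doubling, I would exhibit the family $I_\ell:=[\ell,\ell+1/n^{\ell+1})$ for $\ell\ge 0$. This is the leftmost descendant at depth $\ell+1$ inside $[\ell,\ell+1)$, a finalized leaf of constant density $a^{\ell+1}$, so $\mu(I_\ell)=a^{\ell+1}/n^{\ell+1}$. The dilate $2I_\ell$ contains the left tail $T_\ell:=[\ell-1/(2n^{\ell+1}),\ell)$ of length $1/(2n^{\ell+1})$. Since $1/(2n^{\ell+1})<1/n^{\ell}$, the tail sits inside the Lebesgue region (when $\ell=0$) or inside the deepest rightmost leaf of $[\ell-1,\ell)$, which has density $b^{\ell}\ge 1$ (when $\ell\ge 1$); either way $\mu(2I_\ell)\ge\mu(T_\ell)\ge 1/(2n^{\ell+1})$, so $\mu(2I_\ell)/\mu(I_\ell)\ge 1/(2a^{\ell+1})\to\infty$ as $\ell\to\infty$ because $0<a<1$. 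Hence $\mu$ is not doubling.
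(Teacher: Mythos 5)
Your proof is correct and follows essentially the same strategy as the paper: show $n$-adic doubling by tracking the density multipliers at each level of the construction (the ratio of sibling measures lies in $[a/b,b/a]$), and defeat doubling by pitting the rapidly decaying weight $a^{\ell+1}$ on the leftmost leaf of $[\ell,\ell+1)$ against the neighboring mass just to the left of $\ell$. The paper's proof is much terser (it calls the $n$-adic doubling part ``clear'' and only sketches the blowup at integers), so your case analysis for the $n$-adic doubling and the explicit lower bound $\mu(2I_\ell)/\mu(I_\ell)\ge 1/(2a^{\ell+1})$ are just filling in the details the authors leave implicit, not a different route.
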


\begin{proof}
It is clear that  $\mu$ is $n$-adic doubling, as the ratio \eqref{20201121eq11} for any $n$-adic interval is either $\frac{a}{b}, 1$ or $\frac{b}{a}$. 

Next we argue that $\mu$ is not doubling. Indeed, this follows by comparing the weights near the integer points. More precisely, for each $\ell \ge 1$, note that on its left hand side, there is an $n$-adic interval with weight $b^\ell$ with sidelength $\frac{1}{n^\ell}$; while on its right hand side, there is an $n$-adic interval with weight $a^{\ell+1}$ with sidelength $\frac{1}{n^{\ell+1}}$. Then, for example, we might consider the interval
$$
\left[\ell-\frac{1}{n^{\ell+1}}, \ell+\frac{1}{n^{\ell+1}} \right), 
$$
whose left half has weight $b^\ell$, while right half has weight $a^{\ell+1}$. The desired claim is clear. 
\end{proof}

\medskip

In the rest of the paper, we show that the measure $\mu$ constructed above is not $m$-adic doubling with $m$ satisfying 
\begin{equation} \label{20201123eq01}
\frac{\log n}{\log m} \in \R \backslash \Q. 
\end{equation} 
Without loss of generality, we may assume $n \ge 3$, otherwise, that is if $n=2$, we simply replace it by $n=4$, as $\calD_2=\calD_4$. 

\medskip

We include some motivation before we proceed. The \emph{key} observation is that condition \eqref{20201123eq01} is indeed \emph{not} equivalent to the far number characterization. For example, $\frac{\log 12}{\log 18}$ is irrational, however, neither $\frac{1}{12}$ is $18$-far nor $\frac{1}{18}$ is $12$-far, since $\frac{1}{12}=\frac{27}{18^2}$ and $\frac{1}{18}=\frac{8}{12^2}$, respectively. This suggests us to consider two different cases:

\begin{enumerate}
    \item [I.] $\frac{1}{n}$ is $m$-far;
    
    \medskip
    
    \item [II.] $\frac{1}{n}$ is not $m$-far. 
\end{enumerate}

For the first case, since $\frac{1}{n}$ is $m$-far, this means $\frac{1}{n}$ is ``far away" from all rationals of the form $\frac{k}{m^\ell}$, and hence it suffices to consider sufficient small $m$-adic intervals near the points $\ell+\frac{1}{n}$ for $\ell$ large. While for the second case, we shall later see that this reduces to the solubility of a certain equation (see, \eqref{20201129eq01}), which allows us to modify the argument from the first case so that the ratio \eqref{20201119eq01} will ``blow up" as desired. 

In the rest of this section, we will focus on the case when $\frac{1}{n}$ is $m$-far, while we postpone the more complex second case to the next section. 

\subsection{Proof of Theorem \ref{20201121thm01}: the far number case}  For $\ell$ sufficiently large, we consider the point $\ell+\frac{1}{n}$ and the interval
$$
I_\ell:=\left[\ell+\frac{1}{n}-\frac{1}{n^{\ell+1}},  \ell+\frac{1}{n}+\frac{1}{n^{\ell+1}} \right).
$$
By the construction of $\mu$, the weights associated to $I_\ell$ are $ab^\ell$ on its left half and $1$ on its right half. 

Take any intetger $\ell'>\frac{(\ell+1) \log n}{\log m}$ and let $J_{\ell'}$ be the unique $m$-adic interval which contains $\ell+\frac{1}{n}$ with sidelength $m^{-\ell'}$. Note that by the choice of $\ell'$, $J_{\ell'} \subset I_\ell$ and moreover, since $\frac{1}{n}$ is $m$-far, $\ell+\frac{1}{n}$ does not coincide with any endpoints of $J_{\ell'}$. We consider three cases. To this end, we denote
$$
J_{\ell', L}:=\textrm{the leftmost $m$-adic child of $J_{\ell'}$}
$$
and
$$
J_{\ell', R}:=\textrm{the rightmost $m$-adic child of $J_{\ell'}$}.
$$

\medskip

\textit{Case I: $\ell+\frac{1}{n} \notin J_{\ell', L} \cup J_{\ell', R}$.} In this case, it suffices to note that
\begin{equation} \label{20201130eq01}
\frac{\mu(J_{\ell', L})}{\mu(J_{\ell', R})}=ab^\ell. 
\end{equation}
(see, Figure \ref{20201121Fig02}). 

\begin{figure}[ht]
\begin{tikzpicture}[scale=6]
\draw (-.15,0) -- (1.1,0); 
\fill (0,0) circle [radius=.2pt];
\fill (1,0) circle [radius=.2pt];
\fill (.5, 0) circle [radius=.2pt]; 
\draw [line width=0.8mm, green ] (.2, 0) -- (.3, 0);  
\draw [line width=0.8mm, cyan] (.65, 0) --(.75, 0); 
\fill (0, -.01) node [below] {{\footnotesize $\ell+\frac{1}{n}-\frac{1}{n^{\ell+1}}$}};
\fill (0.5, -.01) node [below] {{\footnotesize $\ell+\frac{1}{n}$}};
\fill (1, -.01) node [below] {{\footnotesize $\ell+\frac{1}{n}+\frac{1}{n^{\ell+1}}$}};
\draw [decorate,decoration={brace,amplitude=8pt,raise=10pt},yshift=2pt] (0, 0) -- (0.5,  0) node [black,midway,xshift=0cm, yshift=.9cm] {\tiny {{\color{blue} $ab^\ell$}}};
\draw [decorate,decoration={brace,amplitude=8pt,raise=10pt},yshift=2pt] (0.5, 0) -- (1,  0) node [black,midway,xshift=0cm, yshift=.9cm] {\tiny {{\color{blue} $1$}}};
\fill (0.2, 0) circle [radius=.15pt];
\fill (0.3, 0) circle [radius=.15pt];
\fill (0.65, 0) circle [radius=.15pt];
\fill (0.75, 0) circle [radius=.15pt]; 
\draw [decorate,decoration={brace,amplitude=8pt,raise=10pt},yshift=-2pt] (0.75, 0) -- (.2,  0) node [black,midway,xshift=0cm, yshift=-.9cm] {\tiny {{\color{red} $J_{\ell'}$}}};
\draw (.25, .01) node [above] {{\tiny {\color{red} $J_{\ell', L}$}}};
\draw (.7, .01) node [above] {{\tiny {\color{red} $J_{\ell', R}$}}};
\end{tikzpicture}
\caption{The far number case: Case I.}
\label{20201121Fig02}
\end{figure}

\medskip

\textit{Case II:  $\ell+\frac{1}{n} \in J_{\ell', L}$}. Let $l\left(J_{\ell'} \right)$ be the left endpoint for $J_{\ell'}$. Note that $l\left(J_{\ell'} \right)$ is of the form $\frac{k}{m^{\ell'}}$ for some $k \in \Z$, this implies that
$$
\left|l\left(J_{\ell'} \right)-\left(\ell+\frac{1}{n} \right) \right| \ge \frac{C}{m^{\ell'}},
$$
for some $C>0$ only depends on $m$ and $n$, where we have used the fact that $\frac{1}{n}$ is $m$-far. Moreover, we can also conclude that $C<\frac{1}{m}$. Indeed, since
$\ell+\frac{1}{n} \in J_{\ell', L}$ and $l\left(J_{\ell'} \right)+\frac{1}{m^{\ell'}} = \frac{k+1}{m^{\ell'}} \neq \ell+\frac{1}{n}$ (since $\frac{1}{n}$ is $m$-far), it follows that 
$$
\left|l\left(J_{\ell'} \right)-\left(\ell+\frac{1}{n} \right) \right|< \frac{1}{m^{\ell'+1}}, 
$$
which together with \eqref{20201130eq01} gives the desired assertion.

Therefore, we have
\begin{equation} \label{20201130eq02}
\frac{\mu\left(J_{\ell', L} \right)}{\mu \left(J_{\ell', R} \right)} \ge \frac{ab^\ell \cdot \frac{C}{m^{\ell'}}+\left(\frac{1}{m}-C \right) \cdot \frac{1}{m^{\ell'}}}{\frac{1}{m^{\ell'+1}}} \ge Cm \cdot ab^\ell. 
\end{equation} 
(see, Figure \ref{20201121Fig03}). 

\medskip

\begin{figure}[ht]
\begin{tikzpicture}[scale=6]
\draw (-.15,0) -- (1.1,0); 
\fill (0,0) circle [radius=.2pt];
\fill (1,0) circle [radius=.2pt];
\draw [line width=0.8mm, green ] (.43, 0) -- (.53, 0);  
\draw [line width=0.8mm, cyan] (.75, 0) --(.85, 0); 
\fill (.5, 0) circle [radius=.2pt]; 
\fill (0, -.01) node [below] {{\footnotesize $\ell+\frac{1}{n}-\frac{1}{n^{\ell+1}}$}};
\fill (0.5, -.01) node [below] {{\footnotesize $\ell+\frac{1}{n}$}};
\fill (1.02, -.01) node [below] {{\footnotesize $\ell+\frac{1}{n}+\frac{1}{n^{\ell+1}}$}};
\draw [decorate,decoration={brace,amplitude=8pt,raise=10pt},yshift=2pt] (0, 0) -- (0.5,  0) node [black,midway,xshift=0cm, yshift=.9cm] {\tiny {{\color{blue} $ab^\ell$}}};
\draw [decorate,decoration={brace,amplitude=8pt,raise=10pt},yshift=2pt] (0.5, 0) -- (1,  0) node [black,midway,xshift=0cm, yshift=.9cm] {\tiny {{\color{blue} $1$}}};
\fill (0.43, 0) circle [radius=.15pt];
\fill (0.53, 0) circle [radius=.15pt];
\fill (0.75, 0) circle [radius=.15pt];
\fill (0.85, 0) circle [radius=.15pt]; 
\draw [decorate,decoration={brace,amplitude=8pt,raise=10pt},yshift=-2pt] (0.85, 0) -- (.43,  0) node [black,midway,xshift=0cm, yshift=-.9cm] {\tiny {{\color{red} $J_{\ell'}$}}};
\draw (.48, .01) node [above] {{\tiny {\color{red} $J_{\ell', L}$}}};
\draw (.8, .01) node [above] {{\tiny {\color{red} $J_{\ell', R}$}}};
\end{tikzpicture}
\caption{The far number case: Case II.}
\label{20201121Fig03}
\end{figure}
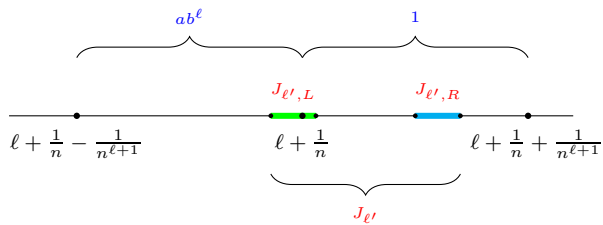

\medskip

\textit{Case III: $\ell+\frac{1}{n} \in J_{\ell', R}$.}  The third case can be treated as an application of the previous two cases. Indeed, by a similar argument as in Case II, we can see the ratio \eqref{20201119eq01} with respect to any two of the $m$-adic children of $J_{\ell'}$ is of size $1$. Therefore, instead of considering $J_{\ell'}$, we consider $J_{\ell', R}$ and check whether the point $\ell+\frac{1}{n}$ locates in the rightmost $m$-adic child of $J_{\ell', R}$ or not: if not, then we can apply the argument in Case I and Case II at this smaller scale to get estimates as in \eqref{20201130eq01} and \eqref{20201130eq02}; otherwise, we simply go down to the next smallest scale and repeat this procedure. Note that this procedure will stop in finite steps as the distance between $\ell+\frac{1}{n}$ and the right endpoint of $J_{\ell'}$ is fixed (see, Figure \ref{20201121Fig04}).

\medskip

\begin{figure}[ht]
\begin{tikzpicture}[scale=6]
\draw (-.15,0) -- (1.1,0); 
\fill (0,0) circle [radius=.2pt];
\fill (1,0) circle [radius=.2pt];
\draw [line width=0.8mm, green ] (.13, 0) -- (.23, 0);  
\draw [line width=0.8mm, cyan] (.47, 0) --(.57, 0); 
\fill (.5, 0) circle [radius=.2pt]; 
\fill (-.05, -.01) node [below] {{\footnotesize $\ell+\frac{1}{n}-\frac{1}{n^{\ell+1}}$}};
\fill (0.5, -.01) node [below] {{\footnotesize $\ell+\frac{1}{n}$}};
\fill (1, -.01) node [below] {{\footnotesize $\ell+\frac{1}{n}+\frac{1}{n^{\ell+1}}$}};
\draw [decorate,decoration={brace,amplitude=8pt,raise=10pt},yshift=2pt] (0, 0) -- (0.5,  0) node [black,midway,xshift=0cm, yshift=.9cm] {\tiny {{\color{blue} $ab^\ell$}}};
\draw [decorate,decoration={brace,amplitude=8pt,raise=10pt},yshift=2pt] (0.5, 0) -- (1,  0) node [black,midway,xshift=0cm, yshift=.9cm] {\tiny {{\color{blue} $1$}}};
\fill (0.13, 0) circle [radius=.15pt];
\fill (0.23, 0) circle [radius=.15pt];
\fill (0.47, 0) circle [radius=.15pt];
\fill (0.57, 0) circle [radius=.15pt]; 
\draw [decorate,decoration={brace,amplitude=8pt,raise=10pt},yshift=-2pt] (0.57, 0) -- (.13,  0) node [black,midway,xshift=0cm, yshift=-.9cm] {\tiny {{\color{red} $J_{\ell'}$}}};
\draw (.18, .01) node [above] {{\tiny {\color{red} $J_{\ell', L}$}}};
\draw (.52, .01) node [above] {{\tiny {\color{red} $J_{\ell', R}$}}};
\end{tikzpicture}
\caption{The far number case: Case III.}
\label{20201121Fig04}
\end{figure}
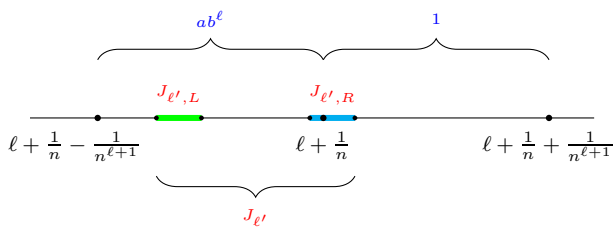

\medskip

In conclusion, if $\frac{1}{n}$ is $m$-far, we have proved that, for each interval $[\ell, \ell+1)$, there exists a sufficiently small $m$-adic interval containing the point $\ell+\frac{1}{n}$, such that the maximum of the ratio \eqref{20201119eq01} is bounded below by $\min \{Cm, 1\} \cdot ab^\ell$, which clearly blows up when $\ell$ converges to infinity.

\bigskip

\section{Proof of the main result: Part II.}

In this section, we prove Theorem \ref{20201121thm01} under the assumption that $\frac{1}{n}$ is \emph{not} $m$-far. To begin with, we first make a remark that our approach in the previous section might not work: indeed, since $\frac{1}{n}$ is not $m$-far, by Proposition \ref{20201120prop01}, for each $\ell \in \N$, we have
$$
\ell+\frac{1}{n}=\frac{k_\ell}{m^{s_\ell}}, \quad \textrm{for some} \quad k_\ell, s_\ell \in \N. 
$$
This implies $\ell+\frac{1}{n}$ can indeed be one of the endpoints when we restrict our attention to those small $m$-adic interval containing $\ell+\frac{1}{n}$ and hence we are not able to benefit any more from the fact that the weights on both sides of $\ell+\frac{1}{n}$ differ dramatically (see, e.g., Figure \ref{20201121Fig02}). Therefore, we have to refine the choice of $m$-adic interval for Case II above, which will entail a more concerted effort.

We shall see later that the desired refinement reduces to study the solubility of the equation
\begin{equation} \label{20201129eq01}
\frac{k}{m^{\ell-1}}=\frac{1}{n^\ell}, 
\end{equation} 
where $m, n \ge 2$. More precisely, if \eqref{20201129eq01} is \emph{unsolvable}, then we can refine our argument in the first case by considering sufficiently small $m$-adic intervals near points $\left\lfloor \frac{l \log m}{\log n} \right\rfloor-1+\frac{1}{n^\ell}$ for $\ell$ sufficiently large. 

For this purpose, we have the following definition.

\begin{defn}
Let $m, n$ be two integers greater than $2$. We say a pair $(m, n)$ is \emph{solvable} if there exists integers $k, \ell \ge 1$ such that \eqref{20201129eq01} holds. Otherwise, we say $(m, n)$ is \emph{unlovable}. 
\end{defn}

The plan of this section is as follows. In the first part, we study the solubility of the equation \eqref{20201129eq01}, more precisely, we shall show that one can ``transform" every solvable pair $(m, n)$ into an unsolvable pair while this ``transformation" is invariant under Theorem \ref{20201121thm01}. In the second part, we use the insolubility of the equation \eqref{20201129eq01} to complete the proof of Theorem \ref{20201121thm01}. 

\medskip

\subsection{Good pairs and semi-good pairs.} We first understand the solubility of the equation \eqref{20201129eq01}, with the assumption that $\frac{1}{n}$ is not $m$-far. 

Recall that by Proposition \ref{20201120prop01}, there exists some $k_0, s_0 \in \N$, such that 
$$
\frac{1}{n}=\frac{k_0}{m^{s_0}}, 
$$
that is, $k_0 n=m^{s_0}$. This implies that if $p$ is a prime factor of $n$, then so is $m$, and hence we can write the prime decomposition of $n$ and $m$ as follows:
\begin{equation} \label{20201201eq01}
n=p_1^{a_1} \dots p_N^{a_N} \quad \textrm{and} \quad m=p_1^{b_1} \dots p_N^{b_N}
\end{equation}
where $p_i, 1 \le i \le N$ are all primes and $a_i, b_i \ge 0, 1 \le i \le N$. Moreover, if for some $i \in \{1, \dots, N\}$, $a_i>0$, then $b_i>0$.

We now introduce the concept of \emph{good pair} and \emph{semi-good pair}. 

\begin{defn} \label{goodpair}
Let $m$ and $n$ be defined as in \eqref{20201201eq01}. We say $(m, n)$ is a \emph{semi-good pair} if
\begin{enumerate}
    \item [(a).] $m>n$;
    \item [(b).] $b_i>a_i, \ 1 \le i \le N$.
\end{enumerate}
Moreover, we say $(m, n)$ is a \emph{good pair} if the second condition above is replaced by the following:
\begin{enumerate}
    \item [(c).] $b_i \ge a_i$ for all $1 \le i \le N$, and there exists some $i \in \{1, \dots, N\}$, such that $a_i=b_i>0$.
\end{enumerate}
\end{defn}

\begin{exa}
$(m, n)=(108, 6)$ is a semi-good pair and $(m, n)=(108, 36)$ is a good pair.
\end{exa}

We have the following easy but important observation. 

\begin{lem} \label{20201201lem01}
A good pair is unsolvable. 
\end{lem}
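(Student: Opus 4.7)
The plan is purely arithmetic: rewrite equation \eqref{20201129eq01} as $k\,n^{\ell} = m^{\ell-1}$ and compare $p$-adic valuations of both sides at a prime supplied by the definition of a good pair.

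First dispose of the trivial edge case $\ell = 1$: the equation becomes $k = 1/n$, which has no solution with $k \in \N$ since $n \geq 3$. So we may assume $\ell \geq 2$.

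For the main case, invoke condition (c) in Definition \ref{goodpair}: there exists an index $i_0 \in \{1,\dots,N\}$ with $a_{i_0} = b_{i_0} > 0$. Apply the $p_{i_0}$-adic valuation $v_{p_{i_0}}$ to both sides of $k\,n^\ell = m^{\ell-1}$. From the factorizations in \eqref{20201201eq01} we have $v_{p_{i_0}}(n) = a_{i_0}$ and $v_{p_{i_0}}(m) = b_{i_0} = a_{i_0}$, so
\[
v_{p_{i_0}}(k) + \ell\,a_{i_0} \;=\; (\ell - 1)\,a_{i_0},
\]
which forces $v_{p_{i_0}}(k) = -a_{i_0} < 0$. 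This contradicts $k \in \N$, since the $p$-adic valuation of a positive integer is non-negative. Hence \eqref{20201129eq01} has no solution.

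There is essentially no obstacle here: the content of the lemma is a one-line valuation check once the definitions are unpacked. The reason condition (b) in Definition \ref{goodpair} (the strict inequality case defining a \emph{semi-good} pair) is treated separately is that for semi-good pairs the valuation identity $v_{p_{i_0}}(k) = (\ell - 1)b_{i_0} - \ell a_{i_0}$ need not be negative, so a different argument (presumably the ``transformation'' alluded to in the section intro, which reduces semi-good pairs to good pairs while preserving the conclusion of Theorem \ref{20201121thm01}) is required; Lemma \ref{20201201lem01} is simply the clean case that this reduction targets.
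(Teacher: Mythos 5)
Your proof is correct and is essentially the paper's own argument phrased in the language of $p$-adic valuations: the paper picks the index with $a_1 = b_1 > 0$, cancels $p_1^{a_1\ell}$ from $k\,n^\ell = m^{\ell-1}$ to leave a $1/p_1^{a_1}$ on the right, and notes that no integer $k$ can produce this, which is exactly your statement $v_{p_{i_0}}(k) = -a_{i_0} < 0$. Your separate handling of $\ell = 1$ is harmless but unnecessary, since the valuation identity already yields the contradiction for every $\ell \geq 1$.
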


\begin{proof}
Without loss of generality, we assume $a_1=b_1>0$ in condition (c) above. Then for each $\ell \ge 1$, \eqref{20201201eq01} is equivalent to
$$
\frac{k p_2^{a_2 \ell} \dots p_N^{a_N \ell}}{p_2^{b_2(\ell-1)} \dots p_N^{b_N(\ell-1)}}=\frac{1}{p_1^{a_1}},
$$
which clearly has no solutions. 
\end{proof}

\begin{rem}
Lemma \ref{20201201lem01} has a certain geometric interpretation: indeed, let $(m, n)$ be a good pair, then Lemma \ref{20201201lem01} asserts that the point $\frac{1}{n^{\ell}}$ is an endpoint of a $m$-adic interval with sidelength $m^{-\ell}$, since condition (c) implies $\frac{1}{n^\ell}=\frac{k'}{m^\ell}$ for some $k' \in \N$, while it is not an endpoint of a $m$-adic interval with sidelength $m^{-\ell+1}$ since $\frac{1}{n^\ell} \neq \frac{k}{m^{\ell-1}}$ for all $k \in \N$. This exactly suggests us how to find a pair of $m$-adic siblings with the ratio \eqref{20201121eq11} between them ``blowing up".  
\end{rem}

However, Lemma \ref{20201201lem01} in general is \emph{not} true for a semi-good pair. For example, if $(m, n)=(108, 6)$, then for $\ell>2$, there always holds
$$
\frac{2^{\ell-2} \cdot 3^{2\ell-3}}{108^{\ell-1}}=\frac{1}{6^\ell}. 
$$
Nevertheless, we observe that it is indeed ``easy" to modify a semi-good pair into a good pair. More precisely, 

\begin{lem} \label{20201202cor01}
Let $m$ and $n$ a semi-good pair. Let further,
$$
\frac{a}{b}=\max_{1 \le i \le N} \frac{a_i}{b_i}.
$$
Then $(m^a, n^b)$ is a good pair. 
\end{lem}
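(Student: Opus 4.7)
The plan is to verify that $(m^a, n^b)$ meets both conditions (a) and (c) of Definition \ref{goodpair}, i.e., that $m^a > n^b$ and that the exponents in the prime decomposition of $m^a$ dominate those of $n^b$ slot-by-slot with equality in at least one positive slot. To set up notation, I would first pick an index $i_0$ at which the maximum $\frac{a}{b} = \max_i \frac{a_i}{b_i}$ is attained. Since $n \ge 3$, at least one $a_i$ is positive and hence the maximum is positive, so $i_0$ may be chosen with $a_{i_0}>0$, and one takes $a = a_{i_0}$, $b = b_{i_0}$. The prime decompositions then read $m^a = \prod_i p_i^{a b_i}$ and $n^b = \prod_i p_i^{b a_i}$.

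The exponent condition (c) then falls out of the definition of $i_0$. For every $i$ with $b_i>0$, maximality gives $\frac{a_i}{b_i} \le \frac{a}{b}$, which rearranges to $a b_i \ge b a_i$; for $i$ with $b_i=0$ the observation (noted just before Definition \ref{goodpair}) that every prime factor of $n$ is a prime factor of $m$ forces $a_i=0$ too, so both sides vanish. Equality at $i=i_0$ is automatic, $ab_{i_0} = a_{i_0}b_{i_0} = b a_{i_0}$, and this common value is strictly positive since $a_{i_0}, b_{i_0}>0$, which is exactly what (c) demands.

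For condition (a), $m^a > n^b$, the slot-by-slot inequality in (c) immediately yields $m^a/n^b = \prod_i p_i^{ab_i - ba_i} \ge 1$, so only strictness remains. To obtain it, I would invoke the standing hypothesis of this section, $\frac{\log n}{\log m} \in \R\setminus\Q$: any equality $m^a = n^b$ would give $a\log m = b\log n$, forcing $\frac{\log n}{\log m} = \frac{a}{b}\in \Q$, a contradiction. Hence $m^a > n^b$ and $(m^a, n^b)$ is a good pair.

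The only delicate step is this last one. The hypothesis $\frac{\log n}{\log m} \notin \Q$ is genuinely needed: for instance $(m,n)=(36,6)$ is semi-good with $\frac{a}{b} = \frac{1}{2}$, yet $m^a = n^b = 36$, showing that strictness in (a) can fail absent the irrationality assumption carried from Theorem \ref{20201121thm01}.
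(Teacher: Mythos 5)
Your proof is correct and fills in real content: the paper dismisses this lemma with ``The proof of this lemma is obvious'' and gives no argument. Your verification of condition (c) — choosing $i_0$ attaining the maximum with $a_{i_0}>0$, noting $b_i>0$ for all $i$ in a semi-good pair so the ratio comparison $a b_i \ge b a_i$ is legitimate, and checking that equality at $i_0$ produces a strictly positive common exponent — is exactly the routine check the paper left unstated.

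The genuinely valuable part of your write-up is the final paragraph. You are right that the lemma, read literally as a standalone statement about semi-good pairs, is false: $(m,n)=(36,6)$ is semi-good (with $a_1=a_2=1$, $b_1=b_2=2$), $\max_i a_i/b_i = 1/2$, and $m^1 = n^2 = 36$, so condition (a) of ``good pair'' fails. The irrationality of $\log n/\log m$, which is a standing hypothesis of Theorem~\ref{20201121thm01} and hence implicitly in force throughout Sections~2 and~3, is what rules this out. Pinning this down explicitly — rather than trusting that ``semi-good'' alone suffices — is more careful than the paper itself. One cosmetic remark: since $b_i > a_i \ge 0$ for every $i$ in a semi-good pair, the case $b_i = 0$ you hedge against never actually occurs, so that clause can be dropped; but it does no harm.
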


\begin{proof}
The proof of this lemma is obvious. 
\end{proof}

\subsection{Proof of the main result: the non-far number case} \label{20201207sec01}

In the second half of this section, we complete the proof of Theorem \ref{20201121thm01} under the assumption $\frac{1}{n}$ is not $m$-far. Recall from \eqref{20201201eq01} that this means if $p$ is a prime factor of $n$, then $p$ also divides $m$. We make several reductions.

\medskip

\textit{Step I: Make $(m, n)$ into a semi-good pair.} This is simple. Indeed, we can just take some $\mathfrak{a}$ sufficiently large, such that
\begin{enumerate}
    \item [1.] $m^{\mathfrak{a}}>n$;
    \medskip
    
    \item [2.] $b_i \mathfrak{a}>a_i$ for all $1 \le i \le N$.
\end{enumerate}
This is possible due to \eqref{20201201eq01}. To this end, it suffices to replace $m$ by $m^{\mathfrak{a}}$ and $n$ unchanged; 

\medskip

\textit{Step II: Make $(m, n)$ into a good pair.} This is guaranteed by Lemma \ref{20201202cor01}. 

\bigskip

We make a remark that Theorem \ref{20201121thm01} is indeed invariant under the operation $(m, n) \to \left(m^a, n^b \right)$ for any integers $a, b \ge 1$. This is indeed due to the basic fact that $\calD_m=\calD_{m^a}$ for any $a \ge 1$ (similarly, $\calD_n=\calD_{n^b}$ for any $b \ge 1$). Therefore,  it suffices to prove Theorem \ref{20201121thm01} under the assumption where $(m, n)$ is a good pair.

For $\ell$ sufficiently large, we consider the point
$$
\mathcal{P}_\ell:=\left\lfloor \frac{\ell \log m}{\log n} \right\rfloor-1+\frac{1}{n^\ell}, 
$$
together with two $m$-adic intervals associated to it: 
$$
K_\ell:=\left[ \mathcal{P}_\ell-\frac{1}{m^{\ell}}, \mathcal{P}_{\ell} \right)=\left[  \left\lfloor \frac{\ell \log m}{\log n} \right\rfloor-1+\frac{1}{n^\ell}-\frac{1}{m^\ell}, \left\lfloor \frac{\ell \log m}{\log n} \right\rfloor-1+\frac{1}{n^\ell}       \right)
$$
and
$$
L_\ell:=\left[ \mathcal{P}_\ell, \mathcal{P}_{\ell}+\frac{1}{m^\ell} \right)=\left[ \left\lfloor \frac{\ell \log m}{\log n} \right\rfloor-1+\frac{1}{n^\ell}, \left\lfloor \frac{\ell \log m}{\log n} \right\rfloor-1+\frac{1}{n^\ell}+\frac{1}{m^\ell} \right). 
$$

\medskip

We collect several basic facts about $\mathcal{P_\ell}, K_\ell$ and $L_\ell$:
\begin{enumerate}
    \item [(1).] $\mathcal{P}_\ell=r(K_\ell)=l(L_\ell)$, where recall that for any interval $I$, $r(I)$ is the right endpoint and $l(I)$ is the left endpoint of $I$, respectively;
    
    \medskip
    
    \item [(2).] Both $K_\ell$ and $L_\ell$ are $m$-adic intervals with sidelength $m^{-\ell}$. Indeed, by the definition of good pair $n \mid m$, which suggests $\frac{1}{n^\ell}=\frac{(m/n)^\ell}{m^\ell}$, which gives the desired assertion; 
    
    \medskip
    
    \item [(3).] $K_\ell$ and $L_\ell$ are $m$-adic siblings, that is, there exists some $m$-adic interval with sidelength $m^{-\ell+1}$, such that it contains both $K_\ell$ and $L_\ell$. Indeed, by Lemma \ref{20201201lem01}, for each $\ell \ge 1$, there does not exists a $k \in \N$, such that
    $$
    \left\lfloor \frac{\ell \log m}{\log n} \right\rfloor-1+\frac{1}{n^\ell}=\frac{k}{m^{\ell-1}}.
    $$
    Note that the solubility of the above equation is equivalent to the solubility of the equation \eqref{20201129eq01} since $\left\lfloor \frac{\ell \log m}{\log n} \right\rfloor$ is a positive integer. This implies that $\mathcal{P}_\ell$ can not be an endpoint for any $m$-adic intervals with sidelength $m^{-\ell+1}$, which implies $K_\ell$ and $L_\ell$ are $m$-adic siblings with a common $m$-adic parent $R$ with sidelength $m^{-\ell+1}$.
\end{enumerate}

Finally, we show that the ratio
$$
\frac{\mu\left(K_\ell \right)}{\mu\left(L_\ell \right)}
$$
diverges when $\ell$ tends to infinity, which will then imply Theorem \ref{20201121thm01} with $\frac{1}{n}$ being assumed not $m$-far. Note that on the interval 
$$
\left[ \left\lfloor \frac{\ell \log m}{\log n} \right\rfloor-1, \left\lfloor \frac{\ell \log m}{\log n} \right\rfloor \right),
$$
the construction procedure presented in Section \ref{20201202sub01} repeats $\left\lfloor \frac{\ell \log m}{\log n} \right\rfloor$ times. In particular, this means that near $\mathcal{P}_\ell$, we can find two $n$-adic intervals, which are 
$$
\widetilde{K}_\ell:=\left[ \mathcal{P}_\ell-\frac{1}{n^{\left\lfloor \frac{\ell \log m}{\log n} \right\rfloor}}, \mathcal{P}_{\ell} \right)
$$
and
$$
\widetilde{L}_\ell:=\left[ \mathcal{P}_\ell, \mathcal{P}_{\ell}+\frac{1}{n^{\left\lfloor \frac{\ell \log m}{\log n} \right\rfloor}} \right)
$$
with the associated weights $a^{\ell}b^{\left\lfloor \frac{\ell \log m}{\log n} \right\rfloor-\ell}$ and $a^\ell$, respectively. Moreover, it is also easy to see that
$$
K_\ell \subseteq \widetilde{K}_{\ell} \quad \textrm{and} \quad L_\ell \subseteq \widetilde{L}_{\ell}. 
$$
This is clear from the fact that 
$$
n^{\left\lfloor \frac{\ell \log m}{\log n} \right\rfloor} \le m^\ell.
$$
Therefore, 
$$
\frac{\mu(K_\ell)}{\mu(L_\ell)}=\frac{a^{\ell}b^{\left\lfloor \frac{\ell \log m}{\log n} \right\rfloor-\ell} \cdot \frac{1}{m^\ell}}{a^{\ell} \cdot \frac{1}{m^\ell}}=b^{\left\lfloor \frac{\ell \log m}{\log n} \right\rfloor-\ell}.
$$
It is clear that the last term diverges when $\ell$ converges to infinity since $m>n$ and 
$$
\left\lfloor \frac{\ell \log m}{\log n} \right\rfloor-\ell \ge \left(\frac{\log m}{\log n}-1 \right) \ell-1. 
$$
(see, Figure \ref{20201121Fig05}). The proof is complete. 

\medskip

\begin{figure}[ht]
\begin{tikzpicture}[scale=10]
\draw (-.15,0) -- (1.1,0); 
\fill (0,0) circle [radius=.2pt];
\fill (1,0) circle [radius=.2pt];
\draw [line width=0.8mm, green ] (.3, 0) -- (.5, 0);  
\draw [line width=0.8mm, cyan] (.5, 0) --(.7, 0); 
\fill (0.3, 0) circle [radius=.2pt];
\fill (0.7, 0) circle [radius=.2pt];
\fill (0.3, -.01) node [below] {{\footnotesize $\mathcal{P}_\ell-\frac{1}{m^\ell}$}};
\fill (0.7, -.01) node [below] {{\footnotesize $\mathcal{P}_\ell+\frac{1}{m^\ell}$}};
\draw (0.4, .01) node [above] {{\footnotesize {\color{red} $K_\ell$}}}; 
\draw (0.6, .01) node [above] {{\footnotesize {\color{red} $L_\ell$}}}; 
\fill (.5, 0) circle [radius=.2pt]; 
\fill (-.05, -.01) node [below] {{\footnotesize $\mathcal{P}_\ell-\frac{1}{n^{\left\lfloor \frac{\ell \log m}{\log n} \right\rfloor}}$}};
\fill (0.5, -.01) node [below] {{\footnotesize $\mathcal{P}_\ell$}};
\fill (1, -.01) node [below] {{\footnotesize $\mathcal{P}_\ell+\frac{1}{n^{\left\lfloor \frac{\ell \log m}{\log n} \right\rfloor}}$}};
\draw [decorate,decoration={brace,amplitude=8pt,raise=10pt},yshift=2pt] (0, 0) -- (0.5,  0) node [black,midway,xshift=0cm, yshift=.9cm] {\tiny {{\color{blue} $a^{\ell}b^{\left\lfloor \frac{\ell \log m}{\log n} \right\rfloor-\ell}$}}};
\draw [decorate,decoration={brace,amplitude=8pt,raise=10pt},yshift=-2pt] (0.5, 0) -- (0,  0) node [black,midway,xshift=0cm, yshift=-.9cm] {\footnotesize {{\color{red} $\widetilde{K}_\ell$}}};
\draw [decorate,decoration={brace,amplitude=8pt,raise=10pt},yshift=-2pt] (1, 0) -- (0.5,  0) node [black,midway,xshift=0cm, yshift=-.9cm] {\footnotesize {{\color{red} $\widetilde{L}_\ell$}}};
\draw [decorate,decoration={brace,amplitude=8pt,raise=10pt},yshift=2pt] (0.5, 0) -- (1,  0) node [black,midway,xshift=0cm, yshift=.9cm] {\tiny {{\color{blue} $a^{\ell}$}}};
\end{tikzpicture}
\caption{The non-far number case: $K_\ell, L_\ell, \widetilde{K}_\ell$ and $\widetilde{L}_\ell$.}
\label{20201121Fig05}
\end{figure}
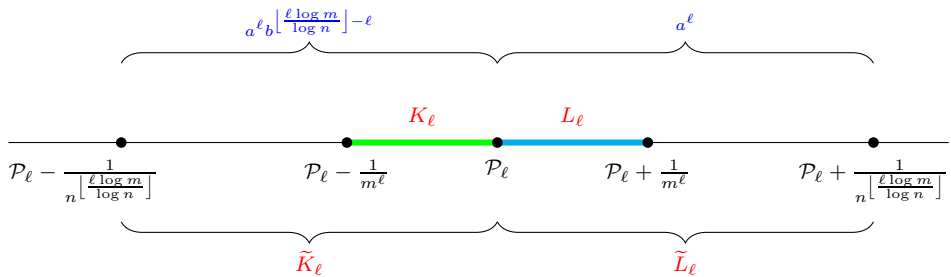

\medskip

As a by product of our main argument to prove Theorem \ref{mainthm}, we have the following a number theoretic classification related to normal numbers (see \cite{Schmidt}, \cite{Wu1}).  Recall

\begin{thm} [\cite{Schmidt}] \label{Schmidtthm}
$\frac{\log m}{\log n}$ is rational if and only if every number normal in base $m$ is also normal in base $n$. \end{thm}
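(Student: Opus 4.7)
The plan is to prove the two directions separately, with the forward direction following from a standard base-change lemma and the reverse direction being the substantive construction.

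For $(\Rightarrow)$, suppose $\frac{\log m}{\log n}=\frac{p}{q}$ with $p,q\in\N$, so that $m^q=n^p$. I would invoke the classical equivalence: for any integer $b\ge 2$ and any $k\ge 1$, a real number $x$ is normal in base $b$ if and only if it is normal in base $b^k$. The proof of this lemma is that the base-$b^k$ digits of $x$ are precisely the base-$b$ digits grouped into consecutive blocks of length $k$, so the frequency of a length-$\ell$ string in base $b^k$ equals the frequency of a corresponding length-$k\ell$ string in base $b$; Weyl's equidistribution criterion makes passing between the two block lengths routine. Chaining the equivalence twice then yields $x$ is normal in base $m$ iff normal in base $m^q=n^p$ iff normal in base $n$. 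This is the essentially formal half of the theorem.

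For $(\Leftarrow)$, assume $\frac{\log m}{\log n}\notin\Q$, and the goal is to exhibit some $x\in[0,1)$ that is normal in base $m$ but not in base $n$. My approach would be to construct the base-$m$ expansion of $x$ block by block. On most long blocks, I would place a Champernowne-type concatenation of all finite base-$m$ strings in lexicographic order, which forces every length-$\ell$ base-$m$ string to appear with asymptotic frequency $m^{-\ell}$ and hence secures base-$m$ normality. Between these blocks, I would insert short ``saboteur'' blocks consisting of a single fixed digit in base $m$; because $\frac{\log m}{\log n}$ is irrational, Weyl equidistribution of $\{\ell \log_n m\}\bmod 1$ lets one position the endpoints of these blocks so that, when the same digit string is re-read in base $n$, a chosen base-$n$ digit is forced to occur with excess frequency on a positive-density subsequence of prefix lengths. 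If the saboteur blocks occupy a vanishing fraction of the base-$m$ digits overall, base-$m$ normality is preserved; if they occupy a non-vanishing fraction of certain base-$n$ prefix lengths, base-$n$ normality fails along that subsequence.

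The main obstacle, and the substance of Schmidt's theorem, is making this simultaneous control rigorous: the base-$m$ and base-$n$ expansions of a single real number are deterministically coupled, and quantifying how a perturbation in one expansion propagates to the other requires careful asymptotic estimates. The irrationality hypothesis enters through Weyl's theorem, which provides infinitely many block boundaries whose base-$n$ positions are ``well-placed'' modulo $1$, but upgrading this qualitative density statement into an effective construction where the base-$n$ bias persists along infinitely many prefixes (rather than being averaged away by the overwhelmingly uniform base-$m$ content) is the technical crux. A cleaner but less explicit alternative would be a measure-theoretic/ergodic route: via a skew-product analysis of the base-$m$ and base-$n$ shifts on $[0,1)$, one shows that the set of $x$ simultaneously normal in both bases has Lebesgue measure strictly smaller than the set of base-$m$-normal numbers precisely when $\frac{\log m}{\log n}\notin\Q$, which immediately produces the desired witness by Borel's theorem.
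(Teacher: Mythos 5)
The paper does not prove this theorem: it is quoted as a known result of Schmidt~\cite{Schmidt} and used as a black box, so there is no in-paper argument to compare your sketch against. Evaluating your sketch on its own terms: the forward direction is fine and standard (the lemma that normality in base $b$ is equivalent to normality in base $b^k$, chained through $m^q=n^p$), and the block-by-block Champernowne-plus-saboteur outline for the reverse direction is the right genre of construction, close in spirit to Cassels's and Schmidt's actual arguments. But, as you acknowledge, it is only an outline; the substance is exactly the quantitative coupling you flag as ``the technical crux,'' and you do not supply it.

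There is also a concrete error in the ``cleaner alternative'' you offer at the end. You claim that when $\frac{\log m}{\log n}\notin\Q$ the set of $x$ normal in both bases has Lebesgue measure strictly smaller than the set of $x$ normal in base $m$. This cannot be: by Borel's theorem almost every $x\in[0,1)$ is normal in \emph{every} integer base $\ge 2$ simultaneously, so both sets have full Lebesgue measure for all $m,n$, rational ratio of logarithms or not. Lebesgue measure is blind to the distinction Schmidt's theorem makes; what changes with the irrationality of $\frac{\log m}{\log n}$ is the mere nonemptiness (and, in refined versions, the full Hausdorff dimension) of the set of numbers normal in base $m$ but not in base $n$, which always has Lebesgue measure zero. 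Any measure-theoretic route would need a measure singular to Lebesgue --- e.g.\ a Riesz-product or Cantor-type measure adapted to the base-$m$ construction, which is in fact how Schmidt proceeds --- so the skew-product/Lebesgue argument as you stated it would fail at the outset.
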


While our approach suggests the following result which classifies a pair of numbers $(m, n)$ which does \emph{not} satisfy Theorem \ref{Schmidtthm}.

\begin{thm} \label{normalcla}
Let $m \ge n$ be two integers with $\frac{\log m}{\log n}$ being irrational.   Then there are only two possible cases:
\begin{enumerate}
    \item [(1).] $\frac{1}{n}$ is $m$-far; 
    \medskip
    \item [(2).] There exists some positive integers $a,b \ge 1$, such that $\left(m^a, n^b\right)$ is a good pair. 
\end{enumerate}
\end{thm}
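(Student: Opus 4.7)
The substantive content is the implication: if case (1) fails, then case (2) holds. I would therefore assume that $\frac{1}{n}$ is \emph{not} $m$-far and construct integers $a, b \ge 1$ with $(m^a, n^b)$ a good pair.

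First, by Proposition~\ref{20201120prop01}, the failure of $m$-farness forces $\frac{1}{n} = \frac{k_0}{m^{s_0}}$ for some positive integers $k_0, s_0$; equivalently $n \mid m^{s_0}$, so every prime factor of $n$ divides $m$. Writing
\[
n = p_1^{a_1}\cdots p_N^{a_N}, \qquad m = p_1^{b_1}\cdots p_N^{b_N}
\]
and retaining only primes with $a_i + b_i > 0$, we have $b_i \ge 1$ whenever $a_i \ge 1$, which is precisely the setup of \eqref{20201201eq01}.

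Next I would carry out the two-step reduction already performed in Section~\ref{20201207sec01}. Choose $\mathfrak{a}\in\N$ so large that $m^{\mathfrak{a}} > n$ and $\mathfrak{a} b_i > a_i$ for every $i$; this is possible because $b_i \ge 1$ whenever $a_i \ge 1$, while $b_i = 0$ forces $a_i = 0$. Then $(m^{\mathfrak{a}}, n)$ satisfies the two conditions of Definition~\ref{goodpair} as a semi-good pair. Applying Lemma~\ref{20201202cor01} to this semi-good pair produces integers $\alpha, \beta \ge 1$ (with $\alpha/\beta = \max_i a_i/(\mathfrak{a} b_i)$) such that $\bigl((m^{\mathfrak{a}})^{\alpha}, n^{\beta}\bigr) = (m^{\mathfrak{a}\alpha}, n^{\beta})$ is a good pair, and setting $a := \mathfrak{a}\alpha$, $b := \beta$ yields (2).

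I expect essentially no obstacle here, since the entire argument merely reassembles Step I and Step II of Section~\ref{20201207sec01}. The one place where the hypothesis $\log m/\log n \notin \Q$ is quietly used is in verifying condition (a) of a good pair, namely $m^{\mathfrak{a}\alpha} > n^{\beta}$: a direct calculation gives $m^{\mathfrak{a}\alpha}/n^{\beta} = \prod_i p_i^{\mathfrak{a}\alpha b_i - \beta a_i}$, whose exponents are all nonnegative by the choice of $\alpha/\beta$, and strict positivity of at least one exponent is equivalent to the ratios $a_i/(\mathfrak{a} b_i)$ not being all equal, which in turn is equivalent to $\log m/\log n$ being irrational. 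Hence the irrationality hypothesis is genuinely required --- for instance, when $m = n$ one checks that neither (1) nor (2) can hold --- rather than being merely decorative.
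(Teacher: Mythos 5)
Your proposal is correct and follows the same two-step reduction the paper carries out in Section \ref{20201207sec01}: invoke Proposition \ref{20201120prop01} to reduce to the case where every prime dividing $n$ also divides $m$, pass to $(m^{\mathfrak{a}}, n)$ to obtain a semi-good pair, and then apply Lemma \ref{20201202cor01} to produce a good pair. Your observation that the irrationality of $\log m/\log n$ is the precise ingredient needed to secure condition (a) (that $m^{\mathfrak{a}\alpha} > n^{\beta}$ strictly) is a worthwhile clarification: the paper dismisses the proof of Lemma \ref{20201202cor01} as ``obvious,'' yet as literally stated that lemma fails for $(m, n) = (4, 2)$, where $\max_i a_i/b_i = 1/2$ gives $(4^1, 2^2) = (4, 4)$, which is not a good pair --- so the irrationality hypothesis is indeed doing real work at exactly the spot you flag.
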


This result is of independent interest. First of all, it has nothing to do with the construction of doubling measures. Secondly, we can see that this result has potential further applications in studying normal numbers (see \cite{Schmidt}) and other objects in number theory. Thirdly, it suggests a possible connection between the adjacency of general dyadic grids and the collection of $n$-adic doubling measures.

\end{document}